\documentclass[10pt,twocolumn,journal]{IEEEtran}

\usepackage{cite}
\usepackage{graphicx}
\usepackage{latexsym}
\usepackage{amsmath}
\usepackage{amssymb}
\usepackage{algorithm}
\usepackage{algorithmic}

\newtheorem{theorem}{Theorem}
\newtheorem{lemma}{Lemma}

\begin{document}
\title{Efficient Computing Budget Allocation for Simulation-based Optimization with Stochastic Simulation Time
\thanks{This work was supported in part by the National Natural Science
Foundation of China under grants (Nos. 60704008, 60736027, 61174072,
and 90924001), the Specialized Research Fund for the Doctoral
Program of Higher Education (No. 20070003110), the National 111
International Collaboration Project (No. B06002), and the TNList
Cross-Disciplinary Research Funding.}}
\author{Qing-Shan~Jia,~\IEEEmembership{Senior~Member,~IEEE}\thanks{Q.-S. Jia is with Center for Intelligent and Networked
Systems (CFINS), Department of Automation, TNLIST, Tsinghua
University, Beijing 100084, China (Email: jiaqs@tsinghua.edu.cn).}}

\maketitle

\begin{abstract}
The dynamics of many systems nowadays follow not only physical laws
but also man-made rules. These systems are known as discrete event
dynamic systems and their performances can be accurately evaluated
only through simulations. Existing studies on simulation-based
optimization (SBO) usually assume deterministic simulation time for
each replication. However, in many applications such as evacuation,
smoke detection, and territory exploration, the simulation time is
stochastic due to the randomness in the system behavior. We consider
the computing budget allocation for SBO's with stochastic simulation
time in this paper, which has not been addressed in existing
literatures to the author's best knowledge. We make the following
major contribution. The relationship between simulation time and
performance estimation accuracy is quantified. It is shown that when
the asymptotic performance is of interest only the mean value of
individual simulation time matters. Then based on the existing
optimal computing budget allocation (OCBA) method for deterministic
simulation time we develop OCBA for stochastic simulation time
(OCBAS), and show that OCBAS is asymptotically optimal. Numerical
experiments are used to discuss the impact of the variance of
simulation time, the impact of correlated simulation time and
performance estimation, and to demonstrate the performance of OCBAS
on a smoke detection problem in wireless sensor network. The numerical results also show that OCBA for deterministic simulation time is robust even when the simulation time is stochastic.
\end{abstract}

{\small \textbf{\emph{Index Terms}--- Discrete event dynamic system,
simulation-based optimization, optimal computing budget
allocation.}}

\section{Introduction}\label{secI}

The dynamics of many systems nowadays follow not only physical laws
but also man-made rules. These systems are known as discrete event
dynamic systems (DEDS's). Simulation is usually the only faithful
way to accurately describe the dynamics of such systems. The
performance optimization of these systems then enter the realm of
simulation-based optimization (SBO). Most existing studies on SBO
assume deterministic simulation time for each replications. However,
there exist a large set of DEDS's where the simulation time is
stochastic. Estimating the evacuation time for a building, the smoke
detection time of a wireless sensor network, and the exploration
time of a multi-agent system under a collaborative search policy are
such examples. It is of great practical interest to allocate the
computing budget among designs so that the best design can be found
with high probability. However, to the author's best knowledge, this
problem has not been considered in existing literatures.

We consider this important problem in this paper. Simulation-based
optimization with stochastic simulation time is nontrivial due to the following difficulties. First, simulation-based
performance evaluation. Simulation is usually time-consuming, and
only provides noisy estimations. In order to obtain an accurate
performance estimation, one has to run simulation by infinite times,
which is infeasible in practice. Second, discrete variables. Designs
usually take discrete and finite values. This not only makes
traditional gradient-based search algorithms not applicable, but
also makes the size of the search space increase exponentially fast
when the system scale increases, which is also known as the curse of
dimensionality. Third, the huge number of computing budget
allocations. One usually does not have time to explore all the
allocations to find the optimum. Instead, sequential allocations
that can iteratively improve their performances are of more
practical interest. Fourth, stochastic simulation time. Giving the
total simulation time that is allocated to a design, it is not clear
how many replications can be finished. Thus the resulting
performance estimation accuracy is not clear.

There exist abundant literatures to address the above first three
difficulties. For example, ranking and selection (R\&S) procedures
are typical procedures for SBO. Bechhofer et
al.\cite{BechhoferSantnerGoldsman1995}, Swisher et
al.\cite{SwisherJacobsonYucesan2003}, and Kim and
Nelson\cite{KimNelson2003} provided excellent review of the R\&S
works. Chen\cite{Chen1996}, Chen et al.\cite{ChenChenYucesan2000},
Chen et al.\cite{ChenLinYucesanChick2000}, and Chen and
Y\"{u}cesan\cite{ChenYucesan2005} developed the optimal computing
budget allocation (OCBA) procedure to maximize the probability of
correctly selecting the best design under a given computing budget.
OCBA has been shown to perform asymptotically optimally when the
simulation time is (identically or nonidentically) deterministic.
OCBA has been extended to tackle the case where the deterministic
computing time for one simulation replication is different across
the alternatives\cite{ChenHeFu2006}, to handle
multiple objective functions\cite{TengLeeChew2007,LeeChewTengGoldsman2010}, simulation-based
constraints\cite{LeePujowidiantoLiChenYap2012}, opportunity
cost\cite{HeChickChen2007}, and complexity
preferences\cite{YanZhouChen2012}. A comprehensive introduction to
OCBA is recently available in \cite{ChenLee2011}. Recent good
surveys on other methods for SBO can be found in
\cite{Andradottir1998,Fu2002,SwisherHydenJacobsonSchruben2004,TekinSabuncuoglu2004,ChenHeFuLee2008}.
The above existing literatures assume deterministic simulation time
and do not address the aforementioned difficulty of stochastic
simulation time.

In this paper we consider the computing budget allocation for SBO
with stochastic simulation time and make the following major
contribution. The relationship between the total simulation time and
the accuracy of performance estimation is quantified. It is shown
that when the asymptotic performance is of interest only the mean
value of individual simulation time matters. Then based on OCBA for
deterministic simulation time we develop OCBA for stochastic
simulation time (OCBAS) and show that OCBAS is asymptotically
optimal. Numerical experiments are used to discuss the impact of the
variance of simulation time, the impact of correlated simulation
time and performance estimation, and to demonstrate the performance
of OCBAS on a smoke detection problem in wireless sensor network. The numerical results also show that OCBA for deterministic simulation time is robust even when the simulation time is stochastic.

The rest of this paper is organized as follows. We present the
mathematical problem formulation in section \ref{secPF}, provide the
main results in section \ref{secMR}, show the numerical results in
section \ref{secNR}, and briefly conclude in section \ref{secC}.

\section{Problem Formulation}\label{secPF}

Consider a finite set of designs $\Theta=\{1,\ldots, k\}$. Let $J_i$
be the true performance of design $i$, which can be accurately
evaluated only through infinite number of replications
\begin{equation}
J_i = \lim_{n\rightarrow\infty} \frac{1}{n}\sum_{j=1}^n \hat{J}_i
(\xi_j),\notag
\end{equation}
where $n$ is the number of replications that are used,
\begin{equation}
\hat{J}_i(\xi_j) = J_i + w_i(\xi_j),\notag
\end{equation}
$\xi_j$ represents the randomness in the $j$-th sample path, and
$w_i$ has i.i.d. Gaussian distribution $N(0,\sigma_i^2)$. Let $t_i$
denote the time that is consumed by an individual replication of
design $i$. We assume the simulation is conducted by a digital computer
and thus $t_i$ takes positive integer values and is stochastic.
Let $f_i$ and $F_i$ be the probability density function
(PDF) and cumulative distribution function (CDF) of $t_i$,
respectively. Assume that $\hat{J}_i$ and $t_i$ are mutually
independent. The case when $\hat{J}_i$ and $t_i$ are correlated will
be discussed in section \ref{secNR}.

Giving $T_i$, the number of replications that design $i$ can be
simulated is stochastic, which is denoted as $n_i$. Then we have
\begin{equation}
\sum_{j=1}^{n_i} t_{i,j} \le T_i < \sum_{j=1}^{n_i+1} t_{i,j},\notag
\end{equation}
where $t_{i,j}$ is the simulation time for the $j$-th simulation of
design $i$. Assume that an incomplete simulation does not output any
estimate. When $T_i$ is large, it is reasonable to assume that
$n_i>0$. The estimate of $J_i$ is
\begin{equation}
\bar{J}_i = \frac{1}{n_i} \sum_{j=1}^{n_i}\hat{J}_i (\xi_j).\notag
\end{equation}
We take the Bayesian viewpoint, which means that the estimates
$\bar{J}_1,\ldots, \bar{J}_k$ are given and the true performances
$J_1,\ldots, J_k$ have posterior estimates
$\tilde{J}_1,\ldots,\tilde{J}_k$. Let $g_i$ and $G_i$ be the PDF and
CDF of $\tilde{J}_i$, respectively. Sort the designs from small to
large according to $\bar{J}_i$, and denote the best design as $b$.
Define the probability of correct selection (PCS) as
\begin{equation}
PCS=\Pr\left\{ \tilde{J}_b \le \tilde{J}_i, i=1,\ldots,k \text{ and
} i\neq b \right\}.\notag
\end{equation}
In other words, we are interested in the probability that the
observed best is the truly best. Now we can mathematically formulate
the problem as
\begin{equation}
\max_{T_1,\ldots,T_k} PCS \text{ s.t. } \sum_{i=1}^k T_i = T,\notag
\end{equation}
where $T$ is the total computing budget. In the following discussion
we will refer this problem as P1. In other words, we are looking for
an allocation of the simulation time among the designs so that the
probability of correct selection is maximized. Note that regarding
$t_{i,j}$ as the event occurrence time and $\hat{J}_i(\xi_j)$ as the
reward, then $\{n_i(T_i)\bar{J}_i(T_i)\}$ is a renewal reward
process\cite{Cox1970}. This fact will be used to show that $G_i$ can
be approximated by a Gaussian (Lemma \ref{lemma3}).

\section{Main Results}\label{secMR}

In this section, we address problem P1 in three steps. First, the
relationship between the simulation time $T_i$ and the distribution
of $\tilde{J}_i$ is quantified. Second, an approximation of the PCS
is provided. Then an approximate version of problem P1 is
formulated and denoted as problem P2. Third, OCBAS is
developed and is shown to solve P2 asymptotically optimally.

We start from quantifying the distribution of $n_i$.
\begin{lemma}\label{lemma1}
For any nonnegative integer $c$, we have
\begin{equation}
\Pr\left\{ n_i\ge c \right\} =\left\{ \begin{array}{ll}
                                        (F_i*f_i^{c-1})(T_i), & c\ge 1, \\
                                        1, & c=0,
                                      \end{array}
\right.\notag
\end{equation}
where $[f*g](t)$ represents the convolution of $f$ and $g$, i.e.,
\begin{eqnarray}
[f*g](t) & \equiv & \int_{-\infty}^\infty f(\tau) g(t-\tau)
d\tau;\notag\\
f^a & \equiv & \underbrace{f*\cdots *f}_{a},\notag
\end{eqnarray}
and $f^0$ is the Dirac delta function.
\end{lemma}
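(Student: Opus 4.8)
The plan is to reduce the statement about the renewal counting variable $n_i$ to a statement about the partial sums of the simulation times, and then to evaluate the distribution of those partial sums by convolution. Write $S_c \equiv \sum_{j=1}^c t_{i,j}$ for the total time consumed by the first $c$ replications of design $i$, with the convention $S_0=0$. Because each $t_{i,j}$ takes positive integer values, the sequence $S_0 < S_1 < S_2 < \cdots$ is strictly increasing, so the defining inequality $S_{n_i}\le T_i < S_{n_i+1}$ identifies $n_i=\max\{c\ge 0 : S_c\le T_i\}$ unambiguously.

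First I would establish the event equivalence $\{n_i\ge c\}=\{S_c\le T_i\}$ for every integer $c\ge 1$. The inclusion $\{S_c\le T_i\}\subseteq\{n_i\ge c\}$ is immediate from the characterization of $n_i$ as a maximum; the reverse inclusion follows because $n_i\ge c$ together with monotonicity of the partial sums forces $S_c\le S_{n_i}\le T_i$. The case $c=0$ is handled separately and is trivial, since $n_i$ is by construction a nonnegative integer, so $\Pr\{n_i\ge 0\}=1$.

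It then remains to evaluate $\Pr\{S_c\le T_i\}$ for $c\ge 1$. Since the $t_{i,j}$ are i.i.d. with density $f_i$, the density of $S_c$ is the $c$-fold convolution $f_i^c$. Decomposing $S_c = t_{i,1}+(S_c-t_{i,1})$, where the second term is independent of $t_{i,1}$ and has density $f_i^{c-1}$, and conditioning on this second term, I would write
\begin{equation}
\Pr\{S_c\le T_i\}=\int_{-\infty}^\infty F_i(T_i-\tau)\,f_i^{c-1}(\tau)\,d\tau=(F_i*f_i^{c-1})(T_i),\notag
\end{equation}
which is exactly the claimed expression. The boundary case $c=1$ provides a consistency check: with the convention $f_i^0=\delta$, the convolution collapses to $(F_i*\delta)(T_i)=F_i(T_i)$, matching the direct computation $\Pr\{n_i\ge 1\}=\Pr\{t_{i,1}\le T_i\}=F_i(T_i)$.

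The only step requiring genuine care is the event equivalence in the first paragraph: it is the positivity (integer-valuedness) of the simulation times that guarantees the partial sums are strictly increasing, and hence that ``at least $c$ completed replications'' is exactly ``the first $c$ replications fit within the budget.'' Once this is in place, the remainder is the standard convolution identity for the CDF of a sum of independent variables and presents no further obstacle.
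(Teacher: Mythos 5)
Your proof is correct and follows essentially the same route as the paper's: both reduce $\{n_i\ge c\}$ to the partial-sum event $\left\{\sum_{j=1}^c t_{i,j}\le T_i\right\}$ and evaluate that probability as $(F_i*f_i^{c-1})(T_i)$ by conditioning on a partial sum. The only difference is cosmetic --- the paper establishes the convolution identity by an explicit induction on $c$, whereas you invoke the standard fact that the sum of $c-1$ i.i.d. variables has density $f_i^{c-1}$ and do a single conditioning step; your explicit justification of the event equivalence via strict monotonicity of the partial sums makes precise a step the paper leaves implicit.
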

\begin{proof}
When $c=0$, we have $\Pr\left\{ n_i\ge 0
\right\}=1$. When $c\ge 1$, we have $\Pr\left\{n_i\ge c\right\} = \Pr\left\{
\sum_{j=1}^c t_{i,j}\le T_i \right\}$.
Then we prove by induction.\\
Step 1. $c=1$. We have $\Pr\left\{ n_i\ge 1 \right\} = \Pr\left\{ t_{i,1} \le T_i\right\} =
F_i (T_i) = (F_i*f^0)(T_i)$.\\
Step 2. Suppose that we have
\begin{equation}
\Pr\left\{n_i \ge c\right\} = \Pr\left\{ \sum_{j=1}^c t_{i,j}\le T_i
\right\} = (F_i *f^{c-1})(T_i).\notag
\end{equation}
Step 1 has provided one such example for $c=1$. We have
\begin{eqnarray}
&&\Pr\left\{n_i\ge c+1\right\}= \Pr\left\{ \sum_{j=1}^{c+1} t_{i,j}\le T_i \right\}\notag\\
&=& \int \Pr\left\{ \sum_{j=1}^c t_{i,j} + t_{i,c+1} \le T_i
\Bigg\vert \sum_{j=1}^c t_{i,j} = x \right\}\notag\\
&&\times\Pr\left\{\sum_{j=1}^c
t_{i,j}=x\right\} dx\notag\\
&=& \int \Pr\left\{ t_{i,c+1}\le T_i-x
\right\}d\Pr\left\{\sum_{j=1}^c t_{i,j}\le x\right\}\notag\\
&=& \int F_i(T_i-x) d(F_i*f_i^{c-1})(x) \notag\\
&=& \int F_i(T_i-x)f_i^c(x) dx = (F_i*f_i^c)(T_i).\notag
\end{eqnarray}
Combining steps 1 and 2 together, we have
\begin{equation}
\Pr\left\{n_i\ge c\right\}=(F_i*f_i^{c-1})(T_i).\notag
\end{equation}
This completes the proof.
\end{proof}

\begin{lemma}\label{lemma2}
\begin{equation}
\Pr\left\{ n_i=c \right\} = \left\{ \begin{array}{ll}
                                      \left[F_i*(f_i^{c-1} - f_i^c) \right](T_i), & c\ge 1, \\
                                      1-F_i(T_i), & c=0.
                                    \end{array}
\right.\notag
\end{equation}
\end{lemma}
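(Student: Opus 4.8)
The key observation is that the event $\{n_i = c\}$ can be expressed as the difference of two nested events already handled in Lemma \ref{lemma1}. Since $n_i$ takes nonnegative integer values, the events $\{n_i \ge c\}$ form a decreasing nested family, so for any $c \ge 0$ we have the telescoping identity
\begin{equation}
\Pr\{n_i = c\} = \Pr\{n_i \ge c\} - \Pr\{n_i \ge c+1\}.\notag
\end{equation}
This reduces the entire lemma to a direct substitution from Lemma \ref{lemma1}, so essentially no new probabilistic content is needed — the work is purely bookkeeping on the convolution expressions.

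For the case $c \ge 1$, I would substitute $\Pr\{n_i \ge c\} = (F_i * f_i^{c-1})(T_i)$ and $\Pr\{n_i \ge c+1\} = (F_i * f_i^{c})(T_i)$, both valid since $c \ge 1$ means $c+1 \ge 1$ as well. Subtracting and using the bilinearity of convolution (convolution with $F_i$ distributes over the difference $f_i^{c-1} - f_i^{c}$) gives
\begin{equation}
\Pr\{n_i = c\} = \left[F_i * (f_i^{c-1} - f_i^{c})\right](T_i),\notag
\end{equation}
which is exactly the claimed first branch. For the case $c = 0$, I would substitute $\Pr\{n_i \ge 0\} = 1$ and $\Pr\{n_i \ge 1\} = F_i(T_i)$ (the latter computed explicitly in Step 1 of the proof of Lemma \ref{lemma1}), yielding $\Pr\{n_i = 0\} = 1 - F_i(T_i)$, matching the second branch.

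There is no real obstacle here; the only point requiring a moment's care is confirming that the formula $\Pr\{n_i \ge c+1\} = (F_i * f_i^{c})(T_i)$ from Lemma \ref{lemma1} applies for the boundary value $c = 1$ (so that the $c \ge 1$ branch is uniformly valid at its smallest index) and, symmetrically, that one must not accidentally invoke the $c \ge 1$ formula for $\Pr\{n_i \ge c\}$ when $c = 0$, where the separate value $1$ must be used instead. Handling the $c = 0$ case separately, exactly as the statement does, avoids this pitfall. Thus the proof is essentially a one-line telescoping argument followed by substitution.
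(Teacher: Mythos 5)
Your proposal is correct and follows essentially the same route as the paper: the paper also uses the telescoping identity $\Pr\{n_i=c\}=\Pr\{n_i\ge c\}-\Pr\{n_i\ge c+1\}$ together with Lemma \ref{lemma1} for $c\ge 1$, and handles $c=0$ separately (directly as $\Pr\{t_{i,1}>T_i\}=1-F_i(T_i)$, which is trivially equivalent to your substitution). No gaps.
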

\begin{proof}
When $c=0$, we have
\begin{equation}
\Pr\left\{ n_i=0 \right\} = \Pr\left\{ t_{i,1}>T_i \right\} =
1-F_i(T_i).\notag
\end{equation}
When $c\ge 1$, we have
\begin{eqnarray}
\Pr\left\{ n_i=c \right\} &=& \Pr\left\{ n_i\ge c \right\}-
\Pr\left\{ n_i \ge c+1 \right\}\notag\\
&=& (F_i*f_i^{c-1})(T_i) - (F_i*f_i^c)(T_i)\notag\\
&=& \left[ F_i*(f_i^{c-1}- f_i^c) \right](T_i).\notag
\end{eqnarray}
This completes the proof.
\end{proof}

Now we have
\begin{theorem}\label{th1}
$\hspace{-0.1cm}G_i(x)\hspace{-0.1cm}= \hspace{-0.1cm}\sum_{c=1}^\infty \hspace{-0.1cm}\Phi\hspace{-0.1cm}\left(\frac{x-\bar{J}_i}{\sigma_i
/\sqrt{c}}\right) \hspace{-0.1cm}\left[ F_i*(f_i^{c-1}-f_i^c)\right](T_i)$ $ +
G_i^0(x) \left[ 1-F_i(T_i) \right]$, where $\Phi(\cdot)$ is the CDF
of $N(0,1)$ and $G_i^0(x)$ is the a priori CDF of $J_i$.
\end{theorem}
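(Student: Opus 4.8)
The plan is to obtain $G_i$ by conditioning on the number of completed replications $n_i$ and applying the law of total probability. Writing $G_i(x)=\Pr\{\tilde J_i\le x\}$ and decomposing over the disjoint events $\{n_i=c\}$ for $c=0,1,2,\ldots$, I would express
\begin{equation}
G_i(x)=\sum_{c=0}^\infty \Pr\{\tilde J_i\le x\mid n_i=c\}\,\Pr\{n_i=c\}.\notag
\end{equation}
Lemma \ref{lemma2} already supplies the mixing weights $\Pr\{n_i=c\}$ exactly, so the remaining work is to identify the conditional posterior CDF $\Pr\{\tilde J_i\le x\mid n_i=c\}$ in each case.

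For the $c=0$ term, no replication is completed, so no sample information about $J_i$ is available; the posterior then coincides with the a priori CDF, giving $\Pr\{\tilde J_i\le x\mid n_i=0\}=G_i^0(x)$, weighted by $\Pr\{n_i=0\}=1-F_i(T_i)$ from Lemma \ref{lemma2}. For each $c\ge 1$, I would invoke the standard OCBA Bayesian result that, under a noninformative prior and $c$ i.i.d.\ observations $\hat J_i(\xi_j)=J_i+w_i(\xi_j)$ with $w_i\sim N(0,\sigma_i^2)$, the posterior of $J_i$ is Gaussian with mean equal to the observed sample average $\bar J_i$ and variance $\sigma_i^2/c$. Hence $\Pr\{\tilde J_i\le x\mid n_i=c\}=\Phi\!\left(\frac{x-\bar J_i}{\sigma_i/\sqrt{c}}\right)$, and substituting the weight $[F_i*(f_i^{c-1}-f_i^c)](T_i)$ from Lemma \ref{lemma2} reproduces precisely the summand in the claimed expression.

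The step that requires care is the justification that, conditioned on $\{n_i=c\}$, the $c$ completed observations still behave like $c$ fresh i.i.d.\ $N(J_i,\sigma_i^2)$ draws, so that the posterior variance is exactly $\sigma_i^2/c$. This is where the modeling assumption that $\hat J_i$ and $t_i$ are mutually independent is essential: because the simulation outputs are independent of the individual simulation times, the event $\{n_i=c\}$ (determined solely by the $t_{i,j}$) carries no information about the noise terms $w_i(\xi_j)$, so conditioning on it does not distort their Gaussian law. Once this independence is used to decouple the sample-size randomness from the estimation noise, the two cases combine term by term to give
\begin{equation}
G_i(x)=\sum_{c=1}^\infty \Phi\!\left(\frac{x-\bar J_i}{\sigma_i/\sqrt{c}}\right)[F_i*(f_i^{c-1}-f_i^c)](T_i)+G_i^0(x)[1-F_i(T_i)],\notag
\end{equation}
completing the proof. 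I note that the correlated case, in which this decoupling fails and conditioning on $n_i$ does reshape the posterior, is exactly what the paper defers to the numerical study in Section \ref{secNR}.
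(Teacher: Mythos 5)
Your proposal is correct and follows essentially the same route as the paper: decompose $G_i(x)$ over the events $\{n_i=c\}$, use Lemma \ref{lemma2} for the weights, and identify the conditional posterior for $c\ge 1$ as $N(\bar J_i,\sigma_i^2/c)$ via the independence of $\hat J_i$ and $t_i$. You are in fact slightly more explicit than the paper about why conditioning on $n_i=c$ leaves the observations' law intact, but the argument is the same.
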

\begin{proof}
We have
\begin{eqnarray}\label{eq1}
&& \hspace{-0.4cm}G_i(x) = \Pr\left\{\tilde{J}_i \le x\right\}\notag\\
&=& \hspace{-0.4cm}\sum_{c=1}^\infty \Pr\left\{ \tilde{J}_i \le x, n_i=c \right\} +
G_i^0(x)\Pr\left\{n_i=0 \right\}\notag\\
&=& \hspace{-0.4cm}\sum_{c=1}^\infty \Pr\left\{ \hspace{-0.1cm}\tilde{J}_i\le x \vert n_i \hspace{-0.1cm}=\hspace{-0.1cm}c\hspace{-0.1cm}
\right\}\hspace{-0.1cm} \Pr\left\{ n_i\hspace{-0.1cm}=\hspace{-0.1cm}c \right\} \hspace{-0.1cm}+\hspace{-0.1cm} G_i^0(x) \Pr\hspace{-0.1cm}\left\{ n_i\hspace{-0.1cm}=\hspace{-0.1cm}0
\right\}.
\end{eqnarray}
Note that
\begin{equation}\label{eq2}
\Pr\hspace{-0.1cm}\left\{\hspace{-0.1cm}\tilde{J}_i \hspace{-0.1cm}\le\hspace{-0.1cm} x \vert n_i\hspace{-0.1cm}=\hspace{-0.1cm}c\hspace{-0.1cm}\right\} \hspace{-0.1cm}=\hspace{-0.1cm} \Pr\hspace{-0.1cm}\left\{\hspace{-0.05cm}
\sum_{j=1}^c \hat{J}_i(\xi_j)/c \le x\hspace{-0.05cm}\right\} \hspace{-0.1cm}=\hspace{-0.1cm}
\Phi\hspace{-0.1cm}\left(\frac{x-\bar{J}_i}{\sigma_i/\sqrt{c}}\right)\hspace{-0.1cm},
\end{equation}
where the first equality follows from the assumption that
$\hat{J}_i$ and $t_i$ are independent, and the second equality
follows from the assumption that $\hat{J}_i(\xi_j)$ are i.i.d.
Gaussian. Combine Eqs. (\ref{eq1}) and (\ref{eq2}) and Lemma
\ref{lemma2}, we then have
\begin{equation}
G_i(x) \hspace{-0.1cm}=\hspace{-0.1cm} \sum_{c=1}^\infty \hspace{-0.1cm}\Phi\hspace{-0.1cm}
\left(\frac{x-\bar{J}_i}{\sigma_i/\sqrt{c}}\right)\hspace{-0.1cm} \left[
F_i\hspace{-0.1cm}*\hspace{-0.1cm}(f_i^{c-1}\hspace{-0.3cm}-\hspace{-0.1cm}f_i^c) \right]\hspace{-0.1cm}(T_i) + G_i^0(x) \left[ 1\hspace{-0.1cm}-\hspace{-0.1cm}F_i(T_i)
\right].\notag
\end{equation}
This completes the proof.
\end{proof}

Theorem \ref{th1} implies that $\tilde{J}_i$ is not Gaussian.
Instead, its CDF $G_i(x)$ is a weighted average of a sequence of
Gaussian CDF's $\Phi\left( \frac{x-\bar{J}_i}{\sigma_i /\sqrt{c}}
\right)$'s, which has equal mean values and decreasing variances. Note that when $T_i \rightarrow \infty$, $\Pr\{n_i=c\}$ is almost zero for most values of $c$ except for $c\approx T_i/\mu_i$, where $\mu_i = \mathbf{E}[t_i]$. In this case $c$ is the expected value of $n_i$.
We have
\begin{lemma}\label{lemma3}
$\lim_{T_i\rightarrow\infty} \left( \sqrt{n_i}\left( \tilde{J}_i -
\bar{J}_i \right)\le x \right) = \Phi\left(x/\sigma_i\right).$
\end{lemma}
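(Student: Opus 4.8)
The plan is to condition on the realized number of completed replications $n_i$ and exploit an exact cancellation between the $\sqrt{n_i}$ normalization and the posterior variance. By the law of total probability,
\begin{equation}
\Pr\left\{ \sqrt{n_i}(\tilde{J}_i - \bar{J}_i) \le x \right\} = \sum_{c=0}^\infty \Pr\left\{ \sqrt{n_i}(\tilde{J}_i - \bar{J}_i) \le x \mid n_i = c \right\} \Pr\{n_i = c\}. \notag
\end{equation}
The heart of the argument is to evaluate the conditional probability for each $c \ge 1$. On the event $\{n_i = c\}$ the factor $\sqrt{n_i}$ equals the constant $\sqrt{c}$, and Eq.~(\ref{eq2}) from the proof of Theorem~\ref{th1} tells us that the posterior $\tilde{J}_i$ is exactly $N(\bar{J}_i, \sigma_i^2/c)$, so that $\sqrt{c}(\tilde{J}_i - \bar{J}_i)$ is exactly $N(0,\sigma_i^2)$ regardless of the observed value of $\bar{J}_i$. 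Hence
\begin{equation}
\Pr\left\{ \sqrt{n_i}(\tilde{J}_i - \bar{J}_i) \le x \mid n_i = c \right\} = \Phi(x/\sigma_i) \notag
\end{equation}
for every $c \ge 1$, \emph{independently of $c$}. This is the key observation: the $c$-dependence of the posterior variance is annihilated precisely by the $\sqrt{n_i}$ scaling.

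Because the conditional probability is the constant $\Phi(x/\sigma_i)$ for all $c \ge 1$, I can pull it out of the sum over $c \ge 1$, leaving $\sum_{c\ge 1}\Pr\{n_i=c\} = \Pr\{n_i \ge 1\} = F_i(T_i)$ by Lemma~\ref{lemma1}. The remaining $c = 0$ term carries the a priori contribution and is weighted by $\Pr\{n_i = 0\} = 1 - F_i(T_i)$ from Lemma~\ref{lemma2}; whatever its precise form, this conditional probability lies in $[0,1]$. Collecting terms gives
\begin{equation}
\Pr\left\{ \sqrt{n_i}(\tilde{J}_i - \bar{J}_i) \le x \right\} = \Phi(x/\sigma_i)\,F_i(T_i) + R(x)\,\bigl(1 - F_i(T_i)\bigr), \notag
\end{equation}
where $0 \le R(x) \le 1$.

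The final step is to let $T_i \to \infty$. Since $t_i$ is a proper positive random variable with finite mean $\mu_i$, its CDF satisfies $F_i(T_i) \to 1$, so the weight $1 - F_i(T_i)$ on the degenerate $n_i = 0$ term vanishes and the Gaussian term dominates, yielding the claimed limit $\Phi(x/\sigma_i)$. The only point requiring care is the $n_i = 0$ event, where $\bar{J}_i$ and the $\sqrt{n_i}$ scaling are degenerate; I dispose of it purely through the crude bound $R(x) \le 1$ together with $\Pr\{n_i = 0\} \to 0$, so no delicate estimate of that term is needed. I expect the main conceptual obstacle to be recognizing the exact cancellation; once that is in hand the limit is routine. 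An alternative route would replace $n_i$ by its mean $T_i/\mu_i$ using the concentration of the renewal count noted just before the lemma, but the conditioning argument above avoids any appeal to a central limit theorem for the renewal reward process.
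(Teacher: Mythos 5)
Your proof is correct, and it takes a genuinely different route from the paper's. The paper's own proof invokes the elementary renewal theorem to argue that $n_i\to\infty$ as $T_i\to\infty$ and then states that the result ``follows naturally from the central limit theorem.'' You instead condition on $\{n_i=c\}$ and observe that, by Eq.~(\ref{eq2}), the posterior satisfies $\sqrt{c}\left(\tilde{J}_i-\bar{J}_i\right)\sim N(0,\sigma_i^2)$ \emph{exactly} for every $c\ge 1$, so the mixture over $c$ collapses to
\begin{equation}
\Pr\left\{\sqrt{n_i}\left(\tilde{J}_i-\bar{J}_i\right)\le x\right\}=\Phi(x/\sigma_i)\,F_i(T_i)+R(x)\left(1-F_i(T_i)\right),\quad 0\le R(x)\le 1,\notag
\end{equation}
and the limit follows merely because $\Pr\{n_i=0\}=1-F_i(T_i)\to 0$. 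This is arguably tighter than the paper's argument: under the paper's own Gaussian-noise assumption the conditional law is exact, so no central limit theorem is needed at all, and a CLT applied to a random number of summands would in any case call for an Anscombe-type justification that the paper does not supply. What your route does \emph{not} deliver --- and what the paper's renewal-theorem step is really aimed at --- is the subsequent replacement of $n_i$ by $T_i/\mu_i$ that turns Lemma~\ref{lemma3} into the working approximation $G_i\approx N(\bar{J}_i,\sigma_i^2\mu_i/T_i)$ used to define the APCS; your proof makes it plain that the lemma as stated is essentially an exact identity and that the renewal-theoretic content enters only in that later substitution. The one degenerate point, the $n_i=0$ event on which $\bar{J}_i$ is undefined, you dispose of correctly with the crude bound on $R(x)$ together with $F_i(T_i)\to 1$.
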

\begin{proof}
Following the elementary renewal theorem\cite{Cox1970}, we have
$\lim_{T_i\rightarrow\infty} {\mathbf{E}[n_i]}/{T_i} =
{1}/{\mu_i}.$
Thus when $T_i\rightarrow\infty$, $n_i$ also goes to infinity. Then
Lemma \ref{lemma3} follows naturally from the central limit theorem.
\end{proof}

Lemma \ref{lemma3} implies that when $T_i$ is large, $G_i(x)$ can be
reasonably approximated by $N(\bar{J}_i, \sigma_i^2\mu_i/T_i)$. The performance of the allocation procedure using this approximation will be shown by numerical experiments in section \ref{secNR}.
Following the Bonferroni inequality we have
\begin{equation}
\Pr\left\{ \tilde{J}_b\le \tilde{J}_i, i=1,\ldots,k \text{ and
}i\neq b \right\}\hspace{-0.1cm}\ge \hspace{-0.1cm}1-\hspace{-0.3cm}\sum_{i=1,i\neq b}^k \hspace{-0.2cm}\Pr\left\{ \tilde{J}_b
>\tilde{J}_i \right\}.\notag
\end{equation}
Following the above analysis $G_b(x)$ and $G_i(x)$ can be
approximated by
$\Phi\left(\frac{x-\bar{J}_b}{\sqrt{\sigma_b^2\mu_b/T_b}}\right)$
and $\Phi\left( \frac{x-\bar{J}_i}{\sqrt{\sigma_i^2\mu_i/T_i}}
\right)$, respectively. Then we have
\begin{equation}
\Pr\left\{\tilde{J}_b >\tilde{J}_i\right\} \approx
\int_{-\frac{\delta_{b,i}}{\sigma_{b,i}}}^\infty
\frac{1}{\sqrt{2\pi}}\exp\left\{ -\frac{t^2}{2} \right\}dt,\notag
\end{equation}
where $\delta_{b,i} = \bar{J}_b - \bar{J}_i$ and $\sigma_{b,i}^2=
{\sigma_b^2\mu_b}/{T_b} + {\sigma_i^2 \mu_i}/{T_i}$. Define
the approximate probability of correct selection (APCS) as
\begin{equation}
APCS \equiv 1-\sum_{i=1,i\neq b}^k
\int_{-\frac{\delta_{b,i}}{\sigma_{b,i}}}^\infty
\frac{1}{\sqrt{2\pi}} \exp\left\{ -\frac{t^2}{2} \right\}dt.\notag
\end{equation}
Then problem P1 can be approximated by
\begin{equation}\label{eq3}
\max_{T_1,\ldots,T_k} APCS 
\text{ s.t. }\sum_{i=1}^k T_i = T.\notag
\end{equation}

Denote the above problem as problem P2. Replacing $n_i$ by
$T_i/\mu_i$ in OCBA\cite{ChenLinYucesanChick2000}, we omit the proof and
directly present the following theorem.
\begin{theorem}\label{th2}
Given a total computing time $T$ to be allocated to $k$ competing
designs whose performances are depicted by random variables with
means $J_1,\ldots, J_k$ and finite variances $\sigma_1^2,\ldots,
\sigma_k^2$, and whose individual simulations take random time with
means $\mu_1,\ldots, \mu_k$ and finite variances, as $T\rightarrow
\infty$, the APCS can be asymptotically maximized when\\
(1)
\begin{equation}\label{eq2.1}
\frac{T_i}{T_j} =
\frac{\sigma_i^2\mu_i/\delta_{b,i}^2}{\sigma_j^2\mu_j/\delta_{b,j}^2},
i,j\in\{1,\ldots, k\} \text{ and }i\neq j \neq b;
\end{equation}
(2)
\begin{equation}\label{eq2.2}
T_b = \sqrt{\sigma_b^2 \mu_b \sum_{i=1,i\neq b}^k
\frac{T_i^2}{\sigma_i^2\mu_i}},
\end{equation}
where $T_i$ is the simulation time
allocated to design $i$, $\delta_{b,i} = \bar{J}_b -\bar{J}_i$, and
$\bar{J}_b = \min_i \bar{J}_i$.
\end{theorem}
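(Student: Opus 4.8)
The plan is to reduce P2 to the classical deterministic-time OCBA allocation problem via the Gaussian approximation of Lemma \ref{lemma3} and then transcribe the known optimality conditions. By Lemma \ref{lemma3}, for large $T_i$ the posterior $\tilde{J}_i$ is approximately $N(\bar{J}_i,\sigma_i^2\mu_i/T_i)$. Introducing the effective variance $v_i:=\sigma_i^2\mu_i$, the estimator variance $v_i/T_i$ plays exactly the role that $\sigma_i^2/N_i$ plays in deterministic OCBA with $N_i$ replications; the identification $N_i=T_i/\mu_i$ makes P2 formally identical to the problem treated in \cite{ChenLinYucesanChick2000}, with $\sigma_i^2$ replaced by $v_i$ and the integer replication counts relaxed to the continuous budgets $T_i$. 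Under this substitution the conclusion follows by invoking the OCBA theorem, which is the route the statement alludes to when it says $n_i$ is replaced by $T_i/\mu_i$.

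To make the argument self-contained I would set up the Lagrangian for P2 directly. Using $\Pr\{\tilde{J}_b>\tilde{J}_i\}\approx\Phi(-\delta_{b,i}/\sigma_{b,i})$ with $\sigma_{b,i}^2=v_b/T_b+v_i/T_i$, form $L=APCS-\lambda(\sum_i T_i-T)$ and impose $\partial L/\partial T_i=0$ for every $i$. Differentiating through $\sigma_{b,i}$ gives, for each $i\neq b$, a stationarity condition of the form $\phi(\delta_{b,i}/\sigma_{b,i})\,|\delta_{b,i}|\,v_i/(\sigma_{b,i}^3 T_i^2)=2\lambda'$ (with $\lambda'=-\lambda$), and for the best design the sum of the corresponding terms weighted by $v_b/T_b^2$. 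Condition (\ref{eq2.2}) is exact within this Gaussian model: eliminating the common factor $\phi(\delta_{b,i}/\sigma_{b,i})\,|\delta_{b,i}|/\sigma_{b,i}^3$ between the equation for $b$ and the equations for $i\neq b$ and cancelling $\lambda'$ yields $T_b^2=v_b\sum_{i\neq b}T_i^2/v_i$, which is Eq. (\ref{eq2.2}) after substituting $v_i=\sigma_i^2\mu_i$. No limit is needed for this part.

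Condition (\ref{eq2.1}) is where the asymptotics enter, and it is the main obstacle. Taking the ratio of the stationarity equations for two non-best designs $i,j$, the dominant behaviour as $T\rightarrow\infty$ sits in the Gaussian exponents $\delta_{b,i}^2/(2\sigma_{b,i}^2)$; following the standard OCBA approximation one treats the comparison among the non-best designs as governed by each design's own estimation variance, $\sigma_{b,i}^2\approx v_i/T_i$, so that equating the error exponents $\delta_{b,i}^2 T_i/v_i=\delta_{b,j}^2 T_j/v_j$ gives $T_i/T_j=(v_i/\delta_{b,i}^2)/(v_j/\delta_{b,j}^2)$, i.e. Eq. (\ref{eq2.1}). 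The delicate point is justifying this approximation: the clean variance-proportional ratio is not the exact stationary point once $v_b/T_b$ is retained, so \emph{asymptotically optimal} here must be understood relative to the Bonferroni/normal-tail surrogate $APCS$ rather than the exact $PCS$. Rigour therefore hinges on (a) upgrading the pointwise renewal/CLT limit of Lemma \ref{lemma3} to control of the Gaussian approximation that is uniform over the allocations as $T\rightarrow\infty$, and (b) showing that the lower-order polynomial prefactors in the stationarity equations do not affect the leading-order allocation. I expect (a) to be the hardest step, which is presumably why the statement defers to the established OCBA analysis.
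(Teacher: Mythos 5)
Your proposal is correct and follows exactly the route the paper itself indicates: the paper omits the proof of Theorem~\ref{th2} entirely, stating only that it is obtained by ``replacing $n_i$ by $T_i/\mu_i$'' in the OCBA result of \cite{ChenLinYucesanChick2000}, which is precisely the reduction you carry out via the effective variance $v_i=\sigma_i^2\mu_i$. Your explicit Lagrangian derivation --- Eq.~(\ref{eq2.2}) exact within the Gaussian/Bonferroni surrogate, Eq.~(\ref{eq2.1}) from matching error exponents with $\sigma_{b,i}^2\approx v_i/T_i$ --- is a faithful transcription of the cited OCBA argument and supplies more detail than the paper does, including the honest caveat that optimality is asserted only for the APCS surrogate.
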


Note that in practice the values of $J_i$'s, $\sigma_i^2$'s, and
$\mu_i$'s usually are not known a priori, and are replaced by the
sample means and sample variances, respectively. This gives us the
sequential computing budget allocation in Algorithm 1, which is
called OCBA for stochastic simulation time (or OCBAS for short). Note that each iteration in OCBA fixes the the total number of replications and thus takes stochastic time. But each iteration in OCBAS fixes the total simulation time. So the number of replications of a design in each iteration becomes stochastic. Despite this difference, the total simulation time allocated to a design in OCBA and OCBAS are very close. This will be demonstrated by the close performances of the two methods in the next section.

\begin{algorithm}[t]
\caption{Optimal computing budget allocation for stochastic
simulation time (OCBAS)}
\begin{algorithmic}
    \STATE Step 0: Simulate each design by $T_0$ time; $l\leftarrow 0$;
    $T_1^l= T_2^l = \cdots T_k^l = T_0$.
    \STATE Step 1: If $\sum_{i=1}^k T_i \ge T$, stop.
    \STATE Step 2: Increase the total simulation time by $\Delta_T$ and compute
    the new budget allocation $T_1^{l+1},\ldots, T_k^{l+1}$ using
    Theorem \ref{th2}.
    \STATE Step 3: Simulate design $i$ for additional
    $\max\left( 0,T_i^{l+1}-T_i^l \right)$ time, $i=1,\ldots,k$; $l\leftarrow l+1$.
    Go to step 1.
\end{algorithmic}
\end{algorithm}

\section{Numerical Results}\label{secNR}
In this section, we present three groups of numerical experiments to
demonstrate the performance of OCBAS. The first group discusses the
impact of variance of individual simulation time (subsection \ref{secIV}). The second group discusses the impact of the
correlation between individual simulation time and performance
estimation (subsection \ref{secIC}). The third group is a
smoke detection problem (subsection \ref{secSD}). Three methods are considered. First, equal allocation (EA), which equally allocates the simulation time among the designs. Second, OCBA, which iteratively allocates the number of replications among the designs\cite{ChenLinYucesanChick2000}. Third, OCBAS, which iteratively allocates the simulation time among the designs.

\subsection{Impact of Variance of Individual Simulation
Time}\label{secIV}

Consider 10 designs with true performances $J_i=i-1$, $i=1,\ldots,
10$. The performance estimation of all the designs have i.i.d. noise
$N(0,6^2)$. The individual simulation time of each designs are
independent and all have the same variance. We consider two types of distributions of the simulation time. First, uniform distribution. We conduct 10 groups of
experiments to consider 10 values of variances, in which the
individual simulation time takes integer values from $[11-j, 9+j]$
with equal probability, $j=1,\ldots, 10$.  Second, truncated discrete Gaussian distribution. We conduct another 10 groups of experiments, in which the individual simulation time of design $i$ satisfies
$\Pr\{t_i=x\} \propto  \Phi\left((x-i+1.5)/{j}\right) - \Phi\left( (x-i+0.5)/{j}\right), x=1, \ldots, 19;
\Pr\{t_i=x\} = 0, \text{ otherwise.}$
Note that different designs have different truncated discrete Gaussian distributions. Assume that the individual
simulation time and performance estimation are independent. We apply
EA, OCBA ($n_0=5,\Delta_n=10$, which means that each design is observed by 5 replications in the beginning and 10 replications are allocated among the designs in each iteration afterwards), and OCBAS ($T_0=50,\Delta_T=100$, which means that each design is observed using 50 units of time in the beginning and 100 units of simulation time are allocated among the designs in each iteration afterwards)
under $T=1000, \ldots, 10000$. Note that in each iteration of OCBA
we calculate the additional number of simulations that are allocated
to each design, which may take a random simulation time to complete.
This is different from OCBAS, in which we allocate the simulation
time directly. The PCS's are estimated using 10000 replications and
shown in Fig. \ref{figE1}. We make the following remarks.

\begin{figure}
\begin{center}
\includegraphics[width=0.45\textwidth]{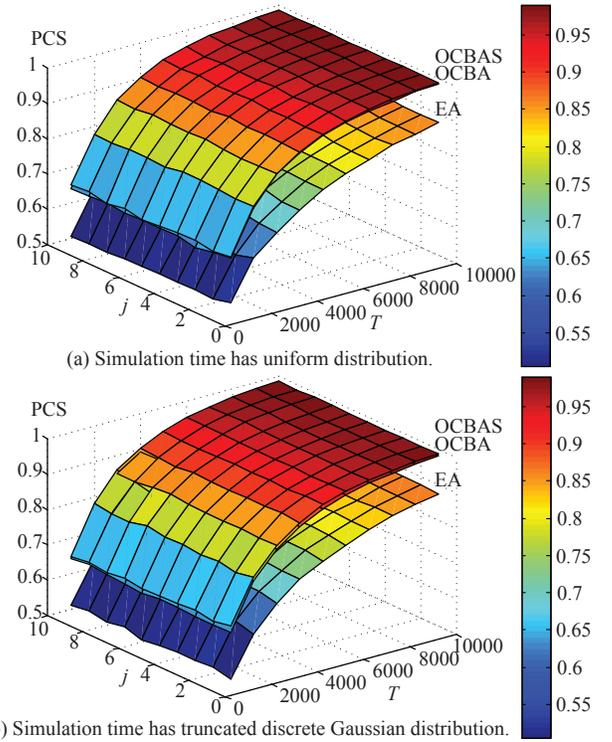}    
\caption{PCS's of EA, OCBA, and OCBAS of subsection \ref{secIV} (averaged over 10000 replications).}  
\label{figE1}                                 
\end{center}                                 
\end{figure}

Remark 1. When the computing budget increases all three methods
achieve higher PCS's. This shows that all three methods can
incrementally improve the PCS's when more computing budget is
available. This salient feature allows sequential allocations of the
computing budget, which are usually preferred over fixed allocations
beforehand in practice.

Remark 2. OCBAS substantially
improves the PCS for a given $T$ (or in other words substantially saves the computing
budget to achieve a given PCS).

Remark 3. OCBA and OCBAS achieve very close performances. The difference between their PCS's are due to randomness of the experiments. This is because the allocation procedures of OCBA
and OCBAS share the same spirit. The difference is that each iteration in OCBA fixes the total number of replications and thus takes stochastic time. But each iteration in OCBAS fixes the total simulation time. Then the total number of replications of a design becomes stochastic.

Remark 4. When the variance of individual simulation time increases,
the PCS's of OCBA and OCBAS do not change much. This is because each design is observed by more and more times when $T$ increases. So the variance of the simulation time does not significantly contribute to the performance estimation when the total simulation time is large, as shown in Lemma \ref{lemma3}. Note that when the total simulation time is small, the variance of the simulation time matters. For example, when $T\le 10$ (the mean value of each individual simulation), a larger variance allows to complete a simulation within $T$ with a larger probability. When the $T>10$, this impact of the variance reduces fast.

\subsection{Correlated Simulation Time and Performance
Estimation}\label{secIC}

Consider 10 designs with true performances $J_i=i-1, i=1,\ldots,
10$. The performance estimation of all the designs have i.i.d. noise
$N(0,6^2)$. The individual simulation time $t_i$ of design $i$ takes values of 5 and 15 with equal probability and is correlated
with its performance observation noise $w_i$ as follows. If $w_i\ge 0$, then $t_i=15$ with probability (w.p.) $p$
and $t_i=5$ w.p. $1-p$, where $0\le p\le 1$ is a given constant. If
$w_i<0$, then $t_i=15$ w.p. $1-p$ and $t_i=5$ w.p. $p$. The value of $p$ indicates the
correlation between $t_i$ and $w_i$. In particular, $p=0$ means that
$t_i$ and $w_i$ are purely negatively correlated; $p=1$ means that
$t_i$ and $w_i$ are purely positively correlated; and $p=0.5$ means
that $t_i$ and $w_i$ are independent. For $p=0,0.1,\ldots, 1.0$ and
$T=1000,\ldots, 10000$, we apply EA, OCBA ($n_0=5,\Delta_n=10$), and
OCBAS ($T_0=50,\Delta_T=100$) and estimate the PCS's by 10000
replications (shown in Fig. \ref{figE4}). Remarks 1-3 also hold in this case. We can also see that the correlation $p$ does not affect PCS much.

\begin{figure}
\begin{center}
\includegraphics[width=0.45\textwidth]{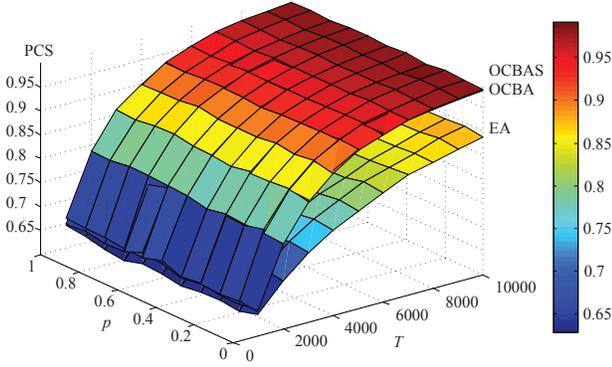}    
\caption{PCS's of EA, OCBA, and OCBAS of subsection \ref{secIC} (averaged over 10000 replications).}  
\label{figE4}                                 
\end{center}                                 
\end{figure}

\subsection{A Smoke Detection Problem}\label{secSD}

We compare three methods on a smoke
detection problem in wireless sensor network. Consider an area of
interest (AoI) with unit size as shown in Fig. \ref{fig1}, which is
discretized into $11\times 11$ grids. A fire may be set at any point
on the grid inside the AoI with equal probability. Once a fire is
set on, it generates a smoke particle within each time slot. A smoke
particle may walk to a neighboring grid in each time slot. There are
at most four such neighboring grids corresponding to four
directions. The probability to walk to one of the four grids is
proportional to its distance to the fire source, i.e.,
\begin{eqnarray}
\Pr\left\{x_{t+1}=x_t +1, y_{t+1} = y_t\right\} &\propto &
d((x_t+1, y_t), (x_0, y_0)),\notag\\
\Pr\left\{x_{t+1}=x_t -1, y_{t+1}=y_t\right\} &\propto & d((x_t-1,
y_t), (x_0, y_0)),\notag\\
\Pr\left\{x_{t+1}=x_t, y_{t+1}=y_t+1\right\} &\propto & d((x_t,
y_t+1), (x_0,y_0)),\notag\\
\Pr\left\{x_{t+1}=x_t, y_{t+1}=y_t-1\right\} &\propto &
d((x_t,y_t-1), (x_0,y_0)),\notag
\end{eqnarray}
where $(x_0,y_0)$ represents the position of the fire source and
$d(\cdot,\cdot)$ represents the distance between two positions. Once
a particle walks to the boundary of AoI, it is bounced back. There
are 3 sensors that can be allocated to the 9 positions marked by
circles in Fig. \ref{fig1}. Once a smoke particle arrives at any of
the three sensors, it is detected. The question is how to allocate
the sensors to minimize the average detection time. It is easy to
show that there are 84 allocations in total. Considering the
symmetries, only 16 allocations need to be considered. The response
time of the 16 designs are evaluated by 100000 independent
replications and shown in Table \ref{tab1}, where the designs are
represented by the positions of the three sensors. Note that in each simulation the response time takes integer values. But the mean values of the response time take positive real numbers. As an example, we show the probability mass function of the response time of the first design (design 1,2,3) in Fig. \ref{figPDF}, which is estimated by 100000 independent replications. Note that in this example we have $\hat{J}_i=t_i$, i.e., the performance estimation and individual simulation time are the same. This violates the assumption used in Theorem \ref{th2}.

\begin{figure}
\begin{center}
\includegraphics[width=0.3\textwidth]{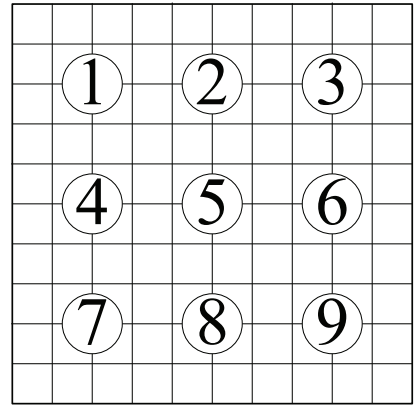}    
\caption{A smoke detection problem in wireless sensor network.}  
\label{fig1}                                 
\end{center}                                 
\end{figure}

\begin{figure}
\begin{center}
\includegraphics[width=0.45\textwidth]{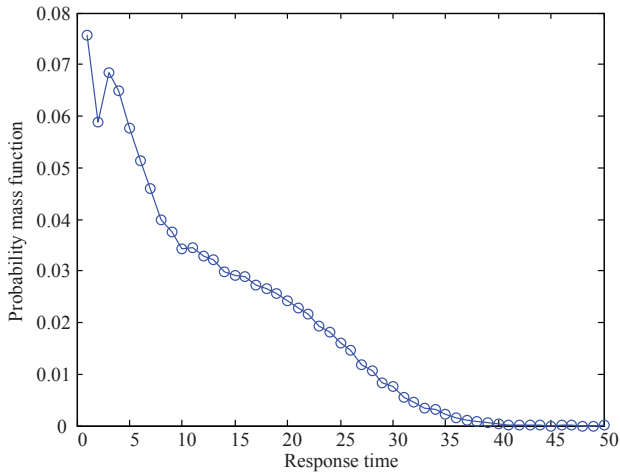}    
\caption{The probability mass function of the response time of the first design, estimated by 100000 replications.}  
\label{figPDF}                                 
\end{center}                                 
\end{figure}

\begin{table}
\caption{True performances of the designs}\label{tab1}\centering
\begin{tabular}[htbp]{rrr|rrr}
\hline\hline Index & Design & Response time & Index & Design & Response time \\
\hline
1 & 1,2,3 & 11.5989 & 9 & 1,3,7 & 8.6777\\
2 & 1,2,4 & 10.6383 & 10 & 1,3,8 & 7.6482\\
3 & 1,2,5 & 9.3776 & 11 & 1,5,6 & 8.0903\\
4 & 1,2,6 & 9.3353 & 12 & 1,5,9 & 8.1355\\
5 & 1,2,7 & 9.7781 & 13 & 1,6,8 & 7.4699\\
6 & 1,2,8 & 8.2390 & 14 & 2,4,5 & 8.5127\\
7 & 1,2,9 & 8.7794 & 15 & 2,4,6 & 7.6968\\
8 & 1,3,5 & 8.6344 & 16 & 2,5,8 & 7.7671\\
\hline\hline
\end{tabular}
\end{table}

The probability of correct selection of the three methods for
$T=1\times 10^4, 2\times 10^4, \ldots, 1\times 10^5$ are evaluated
using 10000 independent replications and shown in Fig.
\ref{figFire}. Since each individual simulation of a design takes
about 10 units of time (as shown in Table \ref{tab1}) to make a
fair comparison between OCBA and OCBAS, we use the following
parameter settings. In OCBA, let $n_0=20$ and $\Delta_n=10$. In OCBAS, let $T_0=200$ and $\Delta_T=100$. Remarks 1-3 also hold in this case. We can see that OCBAS works well even when the performance estimation and individual simulation time are correlated.

\begin{figure}
\begin{center}
\includegraphics[width=0.5\textwidth]{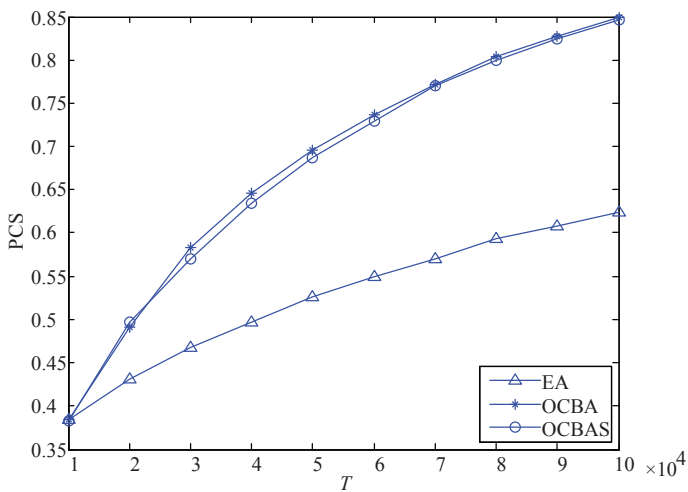}    
\caption{PCS's of EA, OCBA, and OCBAS of subsection \ref{secSD} (averaged over 10000 replications).}  
\label{figFire}                                 
\end{center}                                 
\end{figure}

\section{Conclusion}\label{secC}
In this paper, we consider the computing budget allocation for SBOs
with stochastic simulation time and develop OCBAS to provide the
allocation asymptotically optimally. The performance of OCBAS is
demonstrated through two groups of academic examples and a smoke
detection problem in wireless sensor network. The numerical results also show that OCBA for deterministic simulation time is robust even when the simulation time is stochastic. Note that the
asymptotically optimal allocation of computing budget only depends
on the mean value of the stochastic simulation time. Note that
though we assume the performance estimate $\hat{J}_i$ and the
simulation time for an individual replication $t_i$ are independent
in section \ref{secMR}, the numerical results in section \ref{secNR}
show that OCBAS performs well when $\hat{J}_i$ and $t_i$ are
correlated. Note that Lemma \ref{lemma3} shows that the performance
estimator under stochastic simulation time can be well approximated
by the performance estimator under deterministic simulation time.
Replacing $n_i$ by $T_i/\mu_i$, OCBAS can be obtained
straightforwardly from OCBA. Using Lemma \ref{lemma3}, it is
possible to extend OCBAS to handle multiple objective functions,
simulation-based constraints, opportunity cost, and complexity
preferences, following its according extensions in OCBA. That will
be important future work. Note that when $N$ parallel computers are available, the total computing budget we can use will be improved from $T$ to $NT$. Both OCBA and OCBAS can be extended to this situation. But if each computer can simulate only a specific design, i.e., $N=k$, we usually have a constraint on the decision making time that is $\max_i{T_i}$. How to allocate computing budget according to this constraint is an interesting future research topic. We hope this work brings insights on
addressing SBOs with stochastic simulation time in general.

\section*{Acknowledgments}
The author would like to thank the editor, the associate editor, and
the anonymous reviewers for their constructive comments on earlier
versions of this paper.


\bibliographystyle{IEEEtran}
\bibliography{IEEEabrv,KCMDP}

\end{document}